\title{Scattering resonances on truncated cones}
\author{Dean Baskin}
\address{Department of Mathematics, Texas A\&M University}
\email{dbaskin@math.tamu.edu}
\author{Mengxuan Yang}
\address{Department of Mathematics, Northwestern University}
\email{mxyang@math.northwestern.edu}
\DeclarePairedDelimiter{\floor}{\lfloor}{\rfloor}
\newcommand{\lap}{\Delta}
\newcommand{\pd}[1][]{\partial_{#1}}
\newcommand{\complexes}{\mathbb{C}}
\newcommand{\reals}{\mathbb{R}}
\newcommand{\sphere}{\mathbb{S}}
\DeclareMathOperator{\vol}{Vol}
\DeclareMathOperator{\supp}{supp}
\newcommand{\rt}{\widetilde{r}}
\newcommand{\yt}{\widetilde{y}}
\newcommand{\norm}[2][]{\| #2\|_{#1}}
\newcommand{\bo}{O}
\newcommand{\lo}{o}
\DeclareMathOperator{\Ai}{Ai}
\newtheorem{theorem}{Theorem}
\newtheorem{lemma}[theorem]{Lemma}
\numberwithin{theorem}{section}
\begin{document}

\begin{abstract}
We consider the problem of finding the resonances of the Laplacian on
truncated Riemannian cones.  In a similar fashion to Cheeger--Taylor,
we construct the resolvent and scattering matrix for the Laplacian on
cones and truncated cones.  Following Stefanov, we show that the
resonances on the truncated cone are distributed asymptotically as
$Ar^{n} + o(r^{n})$, where $A$ is an explicit coefficient.  We also
conclude that the Laplacian on a non-truncated cone has no resonances.
\end{abstract}

\maketitle

\section{Introduction}
\label{sec:introduction}

In this note, we consider the  resonances on truncated
Riemannian cones and establish a Weyl-type formula for their
distribution.  To fix notation, we let $(Y,h)$ be a compact
$(n-1)$-dimensional Riemannian manifold (with or without boundary) and
let $C(Y)$ denote the cone over $Y$.  In other words, $C(Y)$ is
diffeomorphic to the product $(0,\infty)_{r}\times Y$ and is equipped
with the incomplete Riemannian metric $g = dr^{2} + r^{2}h$.  We refer
the reader to the foundational work of Cheeger--Taylor~\cite{CT1, CT2}
for more details on the geometric set-up.  We also introduce the
\emph{truncated} Riemannian cone $C_{a}(Y)$ formed by introducing a
boundary at $r=a$, i.e., $C_{a}(Y)$ is diffeomorphic to
$[a, \infty)_{r} \times Y$ and equipped with the same metric.

The (negative-definite) Laplacian on $C(Y)$ (or $C_{a}(Y)$ with a
choice of boundary conditions) has the
form
\begin{equation*}
  \pd[r]^{2} + \frac{n-1}{r}\pd[r] + \frac{1}{r^{2}}\lap_{h},
\end{equation*}
where $\lap_{h}$ denotes the Laplacian of $(Y,h)$.  Its resolvent $R(\lambda)$ is given by 
\begin{equation*}
R(\lambda) = (\lap + \lambda^2)^{-1}.
\end{equation*}
We consider the \emph{cutoff resolvent} $\chi R(\lambda)\chi$, where
$\chi$ is a (fixed) smooth compactly supported function on $C(Y)$ (or
$C_a(Y)$).  One consequence of the resolvent
formula of Theorem~\ref{thm:resolvent} is that the cutoff resolvent extends
meromorphically to the logarithmic cover of $\complexes\setminus \{ 0 \}$.

More precisely, we identify elements $\lambda$ of the logarithmic
cover of $\complexes \setminus \{0 \}$ by a magnitude $|\lambda|$ and
a phase $\arg \lambda \in \reals$.  We identify the ``physical
half-plane'' as those $\lambda$ with $\arg \lambda \in (0, \pi)$.
These $\lambda$ correspond to the resolvent set
$\complexes \setminus [0, \infty)$ via the map
$\lambda \mapsto |\lambda|^{2}e^{2i\arg\lambda}$.  The cutoff
resolvent then extends to be meromorphic as a function of $\lambda$ on
this logarithmic cover.

The poles of the cutoff resolvent consist of possibly finitely many
$L^2$-eigenvalues lying in the upper half-plane (which do not appear
with Dirichlet boundary conditions) and poles lying on other sheets of
the cover.  The latter poles are called the \emph{resonances} of
$\lap$.

The main theorem of this paper counts the most physically relevant
resonances for the truncated cone.  In particular, we count those
resonances $\lambda$ nearest to the physical half-plane, i.e., those
with $\arg \lambda \in (-\frac{\pi}{2}, 0)$ and
$\arg \lambda \in (\pi, \frac{3\pi}{2})$.  The resonances on other
``sheets'' of the cover remain more mysterious and are related to
the zeros of Hankel functions near the real axis.  We consider the
resonance counting function on these sheets, defined by
\begin{equation*}
  N(r) = \# \left\{ \lambda : \lambda \text{ is a resonance and
    }|\lambda| \leq r\right\}.
\end{equation*}
The following theorem provides an asymptotic formula for $N(r)$.
\begin{theorem}
\label{thm:main-thm}
Suppose either that the set of periodic geodesics of $(Y,h)$ has
Liouville measure zero or that $Y = \sphere^{n-1}$ equipped with a
constant rescaling of the standard metric.  
Consider the truncated cone $C_1(Y)$ equipped with the Dirichlet
Laplacian and let $N(r)$ denote its resonance counting function on the
neighboring sheets as above.  We then have, as $r\to \infty$,
\begin{equation*}
N(r) = A_{n}\vol (Y,h) r^n + \lo (r^n),
\end{equation*}
where $A_{n}$ is an explicit constant (defined below in
equation~\eqref{eq:defn-ofAn}) and $\vol(Y,h)$ denotes the volume of
the Riemannian manifold $(Y,h)$.
\end{theorem}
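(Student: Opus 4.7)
Following Stefanov, the strategy is to exploit separation of variables on the cone to reduce the resonance count to a sum of zero counts for individual Hankel functions, and then to invoke a sharp Weyl law on $(Y,h)$ to sum the contributions.

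First, using the explicit cutoff resolvent furnished by Theorem~\ref{thm:resolvent}, I would decompose on an orthonormal basis $\{\phi_j\}$ of eigenfunctions of $\lap_h$, with $\lap_h \phi_j = -\mu_j \phi_j$. Each angular mode reduces the radial problem to a Bessel equation of order $\nu_j = \sqrt{\mu_j + \left(\frac{n-2}{2}\right)^2}$. Imposing the outgoing radiation condition at infinity together with the Dirichlet condition at $r = 1$ then identifies the resonances on the sheet $\arg \lambda \in (-\frac{\pi}{2}, 0)$ with the zeros of the family $\{H^{(1)}_{\nu_j}(\lambda)\}_{j}$, and those on $\arg \lambda \in (\pi, \frac{3\pi}{2})$ with the zeros of $\{H^{(2)}_{\nu_j}(\lambda)\}_{j}$, each with multiplicity equal to the multiplicity of $\mu_j$.

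Second, for each fixed $\nu$ I would invoke Olver's uniform asymptotic expansions of Hankel functions (in Airy form, valid uniformly across the transition region $\nu \sim |\lambda|$) to count the zeros of $H^{(1)}_\nu(\lambda)$ with $|\lambda| \leq r$ in the relevant sector, and similarly for $H^{(2)}_\nu$ on the opposite sheet. The expected output has the form
\begin{equation*}
n_\nu(r) = r \, G\!\left(\frac{\nu}{r}\right) + \bo(1),
\end{equation*}
with $G$ an explicit continuous function supported on $[0,1]$ (of shape $G(s) = \frac{1}{\pi}(\sqrt{1-s^2} - s \arccos s)$ or similar) and a $\bo(1)$ error that is uniform in $\nu$.

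Combining the first two steps gives $N(r) = 2 \sum_j \bigl[r G(\nu_j / r) + \bo(1)\bigr]$, which may be written as a Riemann--Stieltjes integral against the counting function $N_h(t) = \#\{j : \nu_j \leq t\}$. Under the Liouville hypothesis on periodic geodesics (via Duistermaat--Guillemin), or for the round sphere where the spectrum is explicit, one has the sharp two-term Weyl law $N_h(t) = c_n \vol(Y,h) t^{n-1} + \lo(t^{n-1})$. Substituting this and changing variables $t = rs$ reduces the sum to a one-dimensional integral, producing $N(r) = A_n \vol(Y,h) r^n + \lo(r^n)$ with $A_n$ matching the explicit constant~\eqref{eq:defn-ofAn}. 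The principal obstacle is controlling the $\bo(1)$ error of Step~2 uniformly through the transition regime $\nu \approx r$, where elementary Debye expansions for Hankel functions break down and Airy-type uniform asymptotics must be used; uniformity is essential because roughly $r^{n-1}$ modes contribute non-trivially, so any error worse than $\bo(1)$ per mode would overwhelm the $\lo(r^n)$ remainder. The parallel improvement from $\bo(t^{n-1})$ to $\lo(t^{n-1})$ in the Weyl remainder for $(Y,h)$ is equally essential, explaining the role of the periodic-geodesic hypothesis and the separate treatment of the sphere.
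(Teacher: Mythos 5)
Your proposal follows essentially the same route as the paper: separation of variables via Theorem~\ref{thm:resolvent} identifies the resonances with zeros of $H^{(1)}_{\nu_j}$, uniform large-order (Airy-type) asymptotics locate those zeros, and the sharpened Weyl law on $(Y,h)$ converts the sum over modes into an integral. One concrete point would need to be corrected in the execution: the per-mode density you propose, $G(s) = \frac{1}{\pi}\bigl(\sqrt{1-s^2} - s\arccos s\bigr)$, is the density of the \emph{real} zeros of $J_\nu$; it tends to $1/\pi$ as $s\to 0$ and so predicts infinitely many zeros of each mode in any sector. By contrast the zeros of $H^{(2)}_\nu$ in $0 \leq \arg z \leq \pi/2$ are finite in number (roughly $\nu/2 - 1/4$, as the paper extracts from Watson's argument) and, after rescaling by $\nu$, accumulate on the boundary of the ``eye-shaped'' domain $K$; the correct density therefore satisfies $G(s) \to s/2$ as $s\to 0^+$ and vanishes for $s>1$, so that $n_\nu(r) = rG(\nu/r) + \bo(1)$ saturates at about $\nu/2$ once $r \geq \nu$. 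With that $G$ your Riemann--Stieltjes computation against $N_h$ reproduces the paper's constant $A_n = \frac{2(n-1)\vol(B_{n-1})}{n(2\pi)^n}\int_{\pd K_+}\frac{|1-z^2|^{1/2}}{|z|^{n+1}}\,d|z|$. Two smaller items: the resonances on \emph{both} relevant sheets are zeros of $H^{(1)}_{\nu_j}$ (the connection formulae then map them to $H^{(2)}_{\nu_j}$ zeros in the first quadrant; each such zero contributes two resonances), and the uniformity you rightly worry about near the transition $\nu \approx |\lambda|$ is handled in the paper by a separate estimate for zeros with $\arg z \in [0,\epsilon]$, where the count is $\bo(\epsilon^{3/2} r^n)$ and one lets $\epsilon \to 0$ at the end.
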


The constant $A_{n} \vol (Y,h)$ in Theorem~\ref{thm:main-thm} is the
same constant as computed by Stefanov~\cite{Stefanov} for the
resonance counting function on the domain exterior to a ball in
$\reals^{n}$.  When $Y = \mathbb{S}^{n-1}$ is equipped with its
standard metric, the truncated cone $C_{1}(Y)$ can can be thought of
as the exterior of the unit ball in Euclidean space.
Theorem~\ref{thm:main-thm} recovers Stefanov's result.  (When $Y =
\mathbb{S}^{n-1}$, $n$ odd, is equipped with its standard metric, the
cutoff resolvent in fact continues to the complex plane; this can be
seen in the resolvent formulae below.)

We also state the following theorem, which is known to the community
but does not seem to be in the literature.
\begin{theorem}
  \label{thm:full-cone}
  If $(Y,h)$ is a compact Riemannian manifold (with or without
  boundary) then the cone $C(Y)$ has no resonances.
\end{theorem}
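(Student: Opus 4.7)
The plan is to use the explicit spectral decomposition of the resolvent provided by Theorem~\ref{thm:resolvent} to exhibit $\chi R(\lambda)\chi$ as a holomorphic family of bounded operators on the full logarithmic cover of $\complexes \setminus \{0\}$. Writing $\{\phi_j\}$ for an $L^2(Y)$--orthonormal basis of eigenfunctions of $-\lap_h$ with eigenvalues $\mu_j^2$ and setting $\nu_j = \sqrt{\mu_j^2 + (n-2)^2/4}$, the Cheeger--Taylor-type kernel of $R(\lambda)$ on $C(Y)$ for $\arg\lambda \in (0,\pi)$ takes the form
\begin{equation*}
  R(\lambda)(r,y,r',y') = \tfrac{i}{2}(rr')^{-(n-2)/2}\sum_j J_{\nu_j}(\lambda r_<)\, H^{(1)}_{\nu_j}(\lambda r_>)\, \phi_j(y)\overline{\phi_j(y')},
\end{equation*}
with $r_< = \min(r,r')$ and $r_> = \max(r,r')$. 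In contrast with the truncated case, no boundary at $r=a$ is imposed, so no Bessel function appears in a denominator; this is the structural reason one expects $C(Y)$ to support no resonances.

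The first step is to check that each summand is holomorphic in $\lambda$ on the log cover: $J_\nu(\lambda r)$ is entire in $\lambda$ once a branch of $\lambda^\nu$ is fixed, and $H^{(1)}_\nu(\lambda r')$ is holomorphic on $\complexes \setminus (-\infty, 0]$ and lifts to the log cover. The second step is to promote this termwise holomorphy to holomorphy of $\chi R(\lambda)\chi$ as an operator. Fix a compact set $K$ in the log cover. The classical large-order asymptotics
\begin{equation*}
  J_\nu(\lambda r) \sim \frac{(\lambda r/2)^\nu}{\Gamma(\nu+1)}, \qquad H^{(1)}_\nu(\lambda r') \sim -\frac{i\,\Gamma(\nu)}{\pi}(\lambda r'/2)^{-\nu} \qquad (\nu \to \infty)
\end{equation*}
hold uniformly for $\lambda \in K$ and $r,r' \in \supp \chi$. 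Since $\supp \chi$ is contained in some annulus $[r_0, R_0]_r \times Y$ with $r_0 > 0$, the $j$-th summand is dominated by $C_K \nu_j^{-1}(r_</r_>)^{\nu_j}$, and combined with the Weyl asymptotic $\nu_j \sim c j^{1/(n-1)}$ this shows the spectral series converges in Hilbert--Schmidt norm, uniformly in $\lambda \in K$. Thus $\chi R(\lambda) \chi$ is holomorphic on the log cover, and $C(Y)$ has no resonances.

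The main obstacle is establishing the Bessel estimates uniformly in $\lambda$ across every sheet of the log cover, not merely for $\lambda$ in the physical half-plane. This is resolved by observing that the above asymptotics depend only on the modulus $|\lambda|$ being bounded, and that $J_\nu$ and $H^{(1)}_\nu$ lift holomorphically (as functions of $\lambda$) to the entire log cover. A minor technical point is that pointwise convergence fails on the diagonal $r_< = r_>$, where the sum only converges distributionally and recovers the conormal singularity of the free resolvent; this does not obstruct the holomorphic dependence of the cutoff operator.
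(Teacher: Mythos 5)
Your approach is the same as the paper's: write down the separated-variables resolvent kernel for the untruncated cone ($a=0$) from Theorem~\ref{thm:resolvent}, observe that no Hankel function appears in a denominator (in contrast with $J_{\nu_j}(\lambda a)/H^{(1)}_{\nu_j}(\lambda a)$ in the truncated case), so each mode is pole-free on $\Lambda$, and then use large-order Bessel asymptotics to control the spectral series locally uniformly in $\lambda$. That is exactly how the paper deduces Theorem~\ref{thm:full-cone} as an immediate corollary of Theorem~\ref{thm:resolvent} together with the convergence estimate in the lemma that follows it.

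There is, however, one step that fails as stated: the claim that the series converges in \emph{Hilbert--Schmidt} norm. With your bound $|K_j(r,r')|\lesssim \nu_j^{-1}(r_</r_>)^{\nu_j}$ on $\supp\chi\times\supp\chi$, one gets $\|K_j\|_{HS}^2\lesssim \nu_j^{-3}$ (the $(r_</r_>)^{2\nu_j}$ integral near the diagonal produces an extra $\nu_j^{-1}$, no more). Combined with the Weyl count $\#\{j:\nu_j\leq\nu\}\sim C\nu^{n-1}$, the sum $\sum_j\nu_j^{-3}$ diverges for $n\geq 4$. This is not a removable technicality: the cutoff resolvent $\chi R(\lambda)\chi$ is genuinely \emph{not} Hilbert--Schmidt once $n\geq 4$, because (as you yourself note) the series reproduces the conormal diagonal singularity $\sim |x-y|^{-(n-2)}$, which is not square-integrable in dimension $n\geq 4$; asserting HS convergence contradicts that observation. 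The fix is to downgrade the mode of convergence. The paper proves instead that for each fixed $f,g\in L^2$, the scalar series $\langle\chi R(\lambda)\chi f,g\rangle = \sum_j(\ldots)$ is a locally uniformly convergent sum of holomorphic functions of $\lambda$ (using the same Bessel estimates and $\sum_j\|f_j\|\,\|g_j\|<\infty$); this already gives holomorphy of the operator-valued map. Alternatively a Schur test turns your pointwise kernel bound into a uniform operator-norm bound on the partial sums, again without invoking HS. With either replacement, your argument matches the paper's.
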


In fact, Theorem~\ref{thm:resolvent} below shows that $\lambda$ is a
resonance of the truncated cone $C_{1}(Y)$ if and only if $\lambda /
a$ is a resonance of the truncated cone $C_{a}(Y)$.  Sending $a$ to $0$
then pushes all resonances out to infinity and provides evidence for
Theorem~\ref{thm:full-cone}.  

The proof of Theorem~\ref{thm:main-thm} has two main steps.  We first
separate variables and obtain an explicit resolvent formula
in Theorem~\ref{thm:resolvent} to characterize the resonances as zeros
of a Hankel function.  In Section~\ref{sec:reson-count-trunc} we
consider the asymptotic distribution of the zeros of each Hankel
function appearing in the resolvent formula.  The hypothesis on the
link $(Y,h)$ is used to control the error terms when synthesizing the result.
Theorem~\ref{thm:full-cone} is an immediate corollary of the resolvent
formula in Theorem~\ref{thm:resolvent}.

The proof of Theorem~\ref{thm:main-thm} follows an argument of
Stefanov~\cite{Stefanov} very closely.  Stefanov established a
Weyl-type law for the distribution of resonances for the exterior of a
ball in odd-dimensional Euclidean space.  The main contribution of
this paper is the observation that, after some natural modifications,
the core of Stefanov's argument applies to the setting of cones.
Borthwick~\cite{borthwick:2010, borthwick:2012} and
Borthwick--Philipp~\cite{Borthwick-Philipp} showed that a similar
approach works in the asymptotically hyperbolic setting.  

We further remark that we have specialized to the Dirichlet Laplacian
in Theorem~\ref{thm:main-thm} only for simplicity.  For Neumann or
Robin boundary conditions, the resolvent formula of
Theorem~\ref{thm:resolvent} has an analogous expression.  The
resonance counting problem then involves counting zeros of
$H^{(2)'}_{\nu} + C\nu H^{(2)}_{\nu}$, which can be handled with
similar arguments.

\section{Resolvent construction}

In this section we write down an explicit formula (via separation of
variables) for the resolvent and then show that the cut-off resolvent
has a meromorphic continuation to the logarithmic cover $\Lambda$ of
the complex plane.  The construction is essentially contained in the
work of Cheeger--Taylor~\cite{CT1,CT2}, but the resolvent is not
explicitly written there.

Suppose that $\phi_j$ form an orthonormal family of eigenfunctions for
$-\lap_h$ with corresponding eigenvalues $\mu_j^2$.  We decompose
$L^{2}(C(Y))$ into a direct sum in terms of the eigenspaces of
$-\lap_{h}$, i.e.,
\begin{equation*}
  L^{2}(C_{a}(Y); \complexes) = \bigoplus_{j=0}^{\infty}L^{2}((a,
  \infty); E_{j}), \quad f(r,y) = \sum_{j=0}^{\infty}f_{j}(r) \phi_{j}(y),
\end{equation*}
where the first space is defined with respect to the volume form
induced by the metric and the latter spaces can be identified (via the
identification $f(r)\phi_{j}(y) \mapsto f(r)$) with the space
$L^{2}((a,\infty); \complexes)$ equipped with the volume form
$r^{n-1}\,dr$.

For $\arg \lambda \in (0,\pi)$, the resolvent $R(\lambda)$ splits as a
direct sum of operators $R_{j}(\lambda)$ acting on
$L^{2}((a,\infty), E_{j})$, with measure $r^{n-1}\,dr$.
\begin{equation*}
R(\lambda)\left( \sum_{j=1}^\infty f_j(r) \phi_j(y)\right) = \bigoplus_{j=1}^\infty \left( R_j(\lambda)f_j \right)\phi_j (y).
\end{equation*}

In this section, we prove the following explicit formula for the
$j$-th piece of the resolvent.  For the cone $C(Y)$ (i.e., for $a=0$),
we use the Friedrichs extension of the Laplacian to guarantee
self-adjointness (though in high enough dimension the Laplacian is
essentially self-adjoint):

\begin{theorem}
\label{thm:resolvent}
The piece of the resolvent corresponding to the $j$-th eigenvalue has
the following explicit expression on the truncated cone $C_{a}(Y)$ or
the cone $C(Y)$ ($a=0$):
\begin{equation*}
  (R_{j}(\lambda) f)(r) = \int_{a}^{\infty}K_{a,j}(r,\tilde{r}) f(\tilde{r}) \tilde{r}^{n-1}\,d\tilde{r}
\end{equation*}
where $K_{a,j}(r,\tilde{r})$ is given by
\begin{equation*}
  K_{a,j}(r,\tilde{r}) = \frac{\pi}{2i}\left(
    \tilde{r}r\right)^{-\frac{n-2}{2}}
  \begin{cases}
    H^{(1)}_{\nu_{j}}(\lambda \tilde{r}) J_{\nu_{j}}(\lambda r) -
    \frac{J_{\nu_{j}}(\lambda a)}{H^{(1)}_{\nu_{j}}(\lambda
      a)}H^{(1)}_{\nu_{j}}(\lambda \tilde{r})
    H^{(1)}_{\nu_{j}}(\lambda r) & r < \tilde{r} \\
    J_{\nu_{j}}(\lambda \tilde{r}) H^{(1)}_{\nu_{j}} (\lambda r) -
    \frac{J_{\nu_{j}}(\lambda a)}{H^{(1)}_{\nu_{j}}(\lambda
      a)}H^{(1)}_{\nu_{j}}(\lambda \tilde{r})
    H^{(1)}_{\nu_{j}}(\lambda r) & r > \tilde{r}
  \end{cases}
\end{equation*}
Here $J_{\nu}$ are the standard Bessel functions of the first kind and
$H^{(1)}_{\nu}$ are the Hankel functions of the first kind.  The
second term in both expressions should be interpreted as $0$ when
$a=0$.  
\end{theorem}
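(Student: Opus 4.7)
The plan is to separate variables using the eigenfunction decomposition already set up and reduce the construction of each $R_j(\lambda)$ to solving a one-dimensional Bessel-type ODE with prescribed boundary behavior. Writing $\phi_j$ for the $j$-th eigenfunction of $-\lap_h$ with eigenvalue $\mu_j^2$, the action of $\lap + \lambda^2$ on $f(r)\phi_j(y)$ reduces to the radial operator
\begin{equation*}
L_j = \pd[r]^2 + \frac{n-1}{r}\pd[r] - \frac{\mu_j^2}{r^2} + \lambda^2
\end{equation*}
acting on $L^2((a,\infty), r^{n-1}\,dr)$. The first step is to remove the first-order term and the measure weight via the substitution $f(r) = r^{-(n-2)/2} u(r)$. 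A direct calculation turns $L_j f = 0$ into Bessel's equation of order $\nu_j$ with $\nu_j^2 = \mu_j^2 + \left(\tfrac{n-2}{2}\right)^2$ for $u(\lambda r)$, so a fundamental system is given by $J_{\nu_j}(\lambda r)$ and $H^{(1)}_{\nu_j}(\lambda r)$.

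Next I would construct the Schwartz kernel of $R_j(\lambda)$ as the Green's function of $L_j$ in the standard way: $K_{a,j}(r,\tilde r) = u_-(r_<)\, u_+(r_>) / W$, where $u_-$ is the solution satisfying the boundary condition at the inner end and $u_+$ is the solution satisfying the outgoing/$L^2$ condition at infinity, with $W$ the (appropriately weighted) Wronskian. For $\arg\lambda\in(0,\pi)$, $H^{(1)}_{\nu_j}(\lambda r)$ is exponentially decaying as $r\to\infty$, giving the unique outgoing choice $u_+ = r^{-(n-2)/2} H^{(1)}_{\nu_j}(\lambda r)$. For the inner boundary condition, when $a>0$ the Dirichlet condition at $r=a$ forces
\begin{equation*}
u_-(r) = r^{-(n-2)/2}\left( J_{\nu_j}(\lambda r) - \frac{J_{\nu_j}(\lambda a)}{H^{(1)}_{\nu_j}(\lambda a)} H^{(1)}_{\nu_j}(\lambda r) \right),
\end{equation*}
which is precisely the second-term correction in the stated kernel; when $a=0$, the Friedrichs extension selects the solution that is regular (or, when $\nu_j$ is small, less singular) at the tip, which is $J_{\nu_j}(\lambda r)$, so that the correction term vanishes. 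Computing the Wronskian using the classical identity $W\bigl( J_\nu(z), H^{(1)}_\nu(z)\bigr) = 2i/(\pi z)$ and converting back to the $r^{n-1}\,dr$ measure produces exactly the prefactor $\frac{\pi}{2i}(r\tilde r)^{-(n-2)/2}$ in front.

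To conclude I would verify in the physical half-plane that the operator so defined inverts $\lap + \lambda^2$ on a dense subspace and maps into $L^2$, using the asymptotics of $J_\nu$ and $H^{(1)}_\nu$; summing over $j$ assembles $R(\lambda)$. The meromorphic continuation to the logarithmic cover $\Lambda$ is then automatic: for each fixed $j$, the kernel $K_{a,j}(r,\tilde r)$ is a meromorphic function of $\lambda\in\Lambda$ since $J_\nu(\lambda r)$ and $H^{(1)}_\nu(\lambda r)$ are entire (respectively meromorphic with only the logarithmic branching at the origin, captured precisely by passing to $\Lambda$), and the denominator $H^{(1)}_{\nu_j}(\lambda a)$ produces only isolated zeros. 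Cutting off by $\chi$ on both sides makes the sum over $j$ locally uniformly convergent in $\lambda$, yielding the meromorphic continuation of $\chi R(\lambda)\chi$.

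The main obstacle I anticipate is the $a=0$ case: justifying that the Friedrichs extension really singles out the $J_{\nu_j}$ branch uniformly in $j$, including for the small values of $\nu_j$ where both Bessel solutions are in $L^2$ near $r=0$ in the weighted measure. This requires the standard analysis of the indicial roots $\pm \nu_j$ of the Bessel ODE relative to the $r^{n-1}\,dr$ measure, identifying the Friedrichs domain as the one containing the solution with the less singular behavior, exactly as in Cheeger--Taylor. Once that identification is in place, the rest of the argument is the routine Green's function computation sketched above.
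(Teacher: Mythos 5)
Your proposal is correct and takes essentially the same approach as the paper: separate variables, reduce to the Bessel equation of order $\nu_j$ via the substitution $u = r^{-(n-2)/2}v(\lambda r)$, select $H^{(1)}_{\nu_j}$ at infinity and the Dirichlet (or Friedrichs-regular) combination at the inner end, and normalize by the Wronskian. The paper phrases this as variation of parameters with an undetermined constant $C$ fixed by the boundary condition at $r=a$, while you write the equivalent Green's function formula $u_-(r_<)u_+(r_>)/(pW)$ directly, but these are the same computation, and your discussion of the Friedrichs endpoint at $a=0$ (requiring $u$ and $u'$ in weighted $L^2$, which selects the $J_{\nu_j}$ branch even when $\nu_j<1$) matches the paper's resolution of that point.
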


\begin{proof}
  After separating variables, we may assume that
  $f = f_j(r)\phi_j(y)$.  We construct the resolvent for
  $\Im \lambda > 0$ and then meromorphically continue the expression.

  Writing $u = u_j (r) \phi_j(y)$, the equation
  $(  \lap + \lambda^2) u = f$ induces the following differential
  equation for $u_j$:
  \begin{equation}
    \label{eq:inhomog}
    \pd[r]^2 u_j+ \frac{n-1}{r}\pd[r]u_j - \frac{\mu_j^2}{r^2}u_j + \lambda^2 u_j = f_j.
  \end{equation}
  We solve this equation by showing it is equivalent to a Bessel
  equation.  

  Changing variables to $\rho = \lambda r$ and writing
  $\tilde{u}(\rho) = u(\rho / \lambda)$ yields
  \begin{equation*}
    \pd[\rho]^2\tilde{u} + \frac{n-1}{\rho}\pd[\rho]\tilde{u} +
    \left( 1 - \frac{\mu_j^2}{\rho^2}\right) \tilde{u} = \frac{1}{\lambda^{2}}\tilde{f}(\rho).
  \end{equation*}
  Writing $v = \rho ^{(n-2)/2}\tilde{u}$, we obtain a Bessel
  equation for $v$:
  \begin{equation}
    \label{eq:Bessel}
    v '' + \frac{1}{\rho}v' + \left( 1 - \frac{\nu_j^2}{\rho^2}\right) v =
    g(\rho), 
  \end{equation}
  where $\nu_j^2 = \mu_j^2 + \left( \frac{n-2}{2}\right)^2$ and
  $g(\rho ) = \frac{\rho^{(n-2)/2}}{\lambda^2}\tilde{f}(\rho)$.  

  We now proceed by the standard ODE technique of variation of
  parameters.  One basis for the space of solutions of the homogeneous
  version of this Bessel equation is
  $\{ J_{\nu_{j}}(\rho), H^{(1)}_{\nu_{j}}(\rho)\}$, where $J_{\nu}$
  is the Bessel function of the first kind and $H^{(1)}_{\nu}$ is the
  Hankel function of the first kind.  We thus may use the following
  basis for the space of solutions of the homogeneous
  equation:
  \begin{equation}
    \label{eq:basis}
    w_{1}(r) =  r^{-(n-2)/2}J_{\nu_{j}}(\lambda r), \quad w_{2}(r) = r^{-(n-2)/2}H^{(1)}_{\nu_{j}}(\lambda r)
  \end{equation}
  
  For $\Im \lambda > 0$, $R_{j}(\lambda) f_{j}$ must lie in
  $L^{2}((a,\infty), r^{n-1}\,dr)$.  If $f_{j}$ is compactly
  supported, this means that $u_{j} = R_{j}(\lambda)f_{j}$ must be a
  multiple of $r^{-(n-2)/2}H^{(1)}_{\nu_{j}}(\lambda r)$ near
  infinity.  When $a >0$, $u_{j}$ must satisfy the boundary condition
  at $r=a$.  When $a=0$, the choice of the Friedrichs extension
  requires that both $u_{j}$ and $u_{j}'$ lie in the the weighted
  $L^{2}$ space near $0$ and so $u_{j}$ must be a multiple of
  $r^{-(n-2)/2}J_{\nu_{j}}(\lambda r)$ near $r=0$ as any nonzero
  multiple of $w_{2}$ will not have this property.
  
  We may thus write
  \begin{equation*}
    u_{j}(r) = \left( \int_{r}^{\infty}
      \frac{w_{2}(\tilde{r})f_{j}(\tilde{r})}{W(w_{1}, w_{2})(\tilde{r})}\, d\tilde{r} \right) w_{1}(r) +
    \left( C + \int_{a}^{r} \frac{w_{1}(\tilde{r})f_{j}(\tilde{r})}{W(w_{1},
        w_{2})(\tilde{r})}\,d\tilde{r}\right) w_{2}(r),
  \end{equation*}
  where $C$ is a yet-to-be-determined constant, the functions $w_{1}$
  and $w_{2}$ are as in equation~\eqref{eq:basis}, and $W(w_{1}, w_{2})$ is their
  Wronskian.  The Wronskian $W$ can be easily computed in terms of the
  Wronskian of the Bessel and Hankel functions and seen to be
  \begin{equation*}
    W(w_{1}, w_{2})(r) = r^{-(n-1)}\cdot \frac{2i}{\pi}.
  \end{equation*}
  
  We now turn our attention to the boundary condition.  For $a = 0$,
  the requirement that the solution and its derivative live in $L^{2}$
  forces $C = 0$, yielding the result.  For $a \neq 0$, we require
  that $u_{j}(a) = 0$, i.e.,
  \begin{equation*}
    \left( \frac{\pi}{2i} \int_{a}^{\infty} H^{(1)}_{\nu_{j}}(\lambda
      \tilde{r}) \tilde{r}^{\frac{n}{2}}f(\tilde{r}) \,d\tilde{r}\right) a^{-(n-2)/2} J_{\nu_{j}} (\lambda
    a) + C a^{-(n-2)/2}H^{(1)}_{\nu_{j}}(\lambda a) = 0, 
  \end{equation*}
  and so we must have
  \begin{equation*}
    C = - \frac{\pi}{2i} \frac{J_{\nu_{j}}(\lambda
      a)}{H^{(1)}_{\nu_{j}}(\lambda a)} \int_{a}^{\infty}
    H^{(1)}_{\nu_{j}}(\lambda \tilde{r}) \tilde{r}^{\frac{n}{2}}f(x) \, dx,
  \end{equation*}
  finishing the proof.
\end{proof}

We now claim that $\chi R(\lambda)\chi$ has a meromorphic continuation:
\begin{lemma}
Given a fixed $\chi \in C^\infty_c(\reals_+ \times Y)$, $\chi R(\lambda) \chi$ meromorphically continues from 
\begin{equation*}
\{ \lambda \in \mathbb{C} : \Im \lambda > 0 \}
\end{equation*}
to the logarithmic cover $\Lambda$ of the complex plane.
\end{lemma}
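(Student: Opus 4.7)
The plan is to exploit the explicit resolvent formula of Theorem~\ref{thm:resolvent} modewise. First, by choosing $\tilde{\chi}(r) \in C_c^{\infty}(\reals_+)$ that equals $1$ on the projection of $\supp \chi$ onto the $r$-axis, one has $\chi R(\lambda) \chi = \chi \cdot \tilde{\chi} R(\lambda) \tilde{\chi} \cdot \chi$, and multiplication by the bounded operator $\chi$ preserves meromorphy. It therefore suffices to work with a cutoff depending only on $r$, under which the cutoff resolvent decomposes as a direct sum $\bigoplus_{j} \tilde{\chi} R_j(\lambda) \tilde{\chi}$ across the eigenmode decomposition of $L^2(C_a(Y))$.

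For each fixed $j$, the kernel given in Theorem~\ref{thm:resolvent} is meromorphic in $\lambda$ on $\Lambda$, since $J_{\nu_j}(z)$ is entire in $z$ and $H^{(1)}_{\nu_j}(z)$ is holomorphic on the logarithmic cover $\Lambda$. The only poles of the $j$-th summand come from zeros of $H^{(1)}_{\nu_j}(\lambda a)$ (absent when $a = 0$), and these form a discrete subset of $\Lambda$.

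The crux is then to show that the direct sum $\sum_j \tilde{\chi} R_j(\lambda) \tilde{\chi}$ converges in the operator norm on $L^2$, locally uniformly on compact subsets $K \subset \Lambda$ disjoint from the combined pole set. Since the sum is direct, this reduces to the estimate $\|\tilde{\chi} R_j(\lambda) \tilde{\chi} \| \to 0$ as $\nu_j \to \infty$, uniformly for $\lambda \in K$. For $\nu_j$ large compared to $\sup_{\lambda \in K} |\lambda|$, I would invoke the large-order asymptotics
\begin{equation*}
J_\nu(z) \sim \frac{(z/2)^\nu}{\Gamma(\nu + 1)}, \qquad H^{(1)}_\nu(z) \sim -\frac{i \Gamma(\nu)}{\pi}(z/2)^{-\nu},
\end{equation*}
valid uniformly for $|z|$ bounded. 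Writing $\supp \tilde{\chi} \subset [r_0, R]$ with $r_0 > a$, the leading term of $K_{a,j}$ is then $O(\nu_j^{-1})$, and the correction term containing the factor $J_{\nu_j}(\lambda a)/H^{(1)}_{\nu_j}(\lambda a)$ carries an additional factor comparable to $(a/r_0)^{2\nu_j}$. A Schur test on the resulting kernel then gives $\|\tilde{\chi} R_j(\lambda)\tilde{\chi}\| = O(\nu_j^{-1})$ uniformly on $K$, and since $\nu_j \to \infty$ by the Weyl law on $(Y,h)$, the series converges uniformly to a meromorphic limit on $\Lambda$.

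The main obstacle is ensuring the asymptotic expansions above hold uniformly in $\lambda$ across compact subsets of $\Lambda$ that may lie on sheets far from the physical half-plane, where $(z/2)^{\pm \nu}$ carries rotational phases from the cover. This is handled by appealing to Olver's uniform expansions for Bessel functions of large order in any fixed sector, together with the observation that the finitely many modes with $\nu_j$ comparable to $\sup_K |\lambda|$ contribute only finitely many meromorphic summands, which cause no convergence issue.
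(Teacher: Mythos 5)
Your proposal is essentially correct but takes a genuinely different route from the paper, primarily in how the truncated cone case ($a > 0$) is handled. The paper proves the lemma for the full cone ($a = 0$) by a modewise large-order Bessel estimate very much like yours (bounding the weak pairing $\langle \chi R(\lambda)\chi f, g\rangle$ term by term), but then for $a > 0$ it switches strategies entirely: it builds a parametrix by gluing the full-cone resolvent near infinity with the resolvent of a compact manifold with boundary near $r=a$, and invokes the analytic Fredholm theorem. You instead apply the modewise asymptotics directly to the kernel $K_{a,j}$, observing that the Dirichlet correction term carries the small factor $J_{\nu_j}(\lambda a)/H^{(1)}_{\nu_j}(\lambda a) \cdot H^{(1)}_{\nu_j}(\lambda \rt)H^{(1)}_{\nu_j}(\lambda r) = O(\nu_j^{-1}(a^2/(r\rt))^{\nu_j})$, so both terms of the kernel are $O(\nu_j^{-1})$ and a Schur test gives $\norm{\tilde\chi R_j(\lambda)\tilde\chi} = O(\nu_j^{-1})$ on the orthogonal block, hence operator-norm convergence of the direct sum. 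This unifies the two cases and avoids the parametrix machinery; the paper's route is arguably more robust (it does not require the explicit structure of the correction term, only that the interior region is a compact manifold with boundary), but yours is cleaner and more self-contained given that Theorem~\ref{thm:resolvent} is already in hand.

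Two small points to tighten. First, $J_{\nu}(z)$ is \emph{not} entire for non-integer $\nu$: the factor $(z/2)^{\nu}$ introduces a branch point at $z=0$, so $J_{\nu}$ is holomorphic on $\Lambda$ (which is all you need) but not on $\complexes$. Second, you assume $\supp \tilde\chi \subset [r_0, R]$ with $r_0 > a$, but nothing in the hypothesis prevents $\chi$ from being supported right down to $r=a$ (which lies in $\reals_+$). This is harmless: on the truncated cone one only has $r,\rt \geq a$, so $(a^2/(r\rt))^{\nu_j} \leq 1$, and the $O(\nu_j^{-1})$ bound survives without any exponential gain; you should phrase the estimate that way rather than asserting the strict inequality $r_0 > a$.
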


\begin{proof}
  We first prove the statement for the full cone; the statement for
  the truncated cone will follow by an appeal to the analytic Fredholm
  theorem.

  Fix $\chi \in C^{\infty}_{c}((0,\infty))$ and regard $\chi (r)$ as a
  compactly supported smooth function on $C(Y)$.  We let $R(\lambda)$
  denote the resolvent on the non-truncated cone (i.e., $a=0$) and
  $K(\lambda; r, y, \rt, \yt)$ denote its integral kernel.  In order
  to show that $\chi R(\lambda)\chi$ meromorphically continues, it
  suffices to show that for any $f, g \in L^{2}(C(Y))$, the function
  \begin{equation*}
    \lambda \mapsto \langle \chi R(\lambda) \chi f, g\rangle
  \end{equation*}
  meromorphically continues to $\Lambda$.  

  Fix two such functions $f, g\in L^{2}(C(Y))$ and let $f_{j}(r)$ and
  $g_{j}(r)$ denote their coefficients in the expansion in terms of
  eigenfunctions of $\lap_{h}$, i.e.,
  \begin{equation*}
    f(r,y) = \sum_{j=0}^{\infty}f_{j}(r) \phi_{j}(y).
  \end{equation*}
  We observe that because $f$ and $g$ are square-integrable, the sum
  and the integral commute, i.e., 
  \begin{equation*}
    \norm[L^{2}(C(Y))]{f}^{2} = \int_{0}^{\infty}
    \sum_{j=0}^{\infty}|f_{j}(r)|^{2} r^{n-1}\,dr =
    \sum_{j=0}^{\infty} \int_{0}^{\infty}|f_{j}(r)|^{2} r^{n-1}\,dr.
  \end{equation*}

  From Theorem~\ref{thm:resolvent}, we may write
  \begin{align}
    \label{eq:pair-expansion}
    \langle \chi R(\lambda) \chi f, g \rangle &=
                                                \sum_{j=0}^{\infty} \left(
                                                \int_{0}^{\infty}\int_{0}^{r}
                                                (\rt r)^{-\frac{n-2}{2}}
                                                \chi(r)\chi(\rt)
                                                f_{j}(\rt) g_{j}(r)
                                                J_{\nu_{j}}(\lambda
                                                \rt)
                                                H_{\nu_{j}}^{(1)}(\lambda
                                                r) \rt^{n-1} r^{n-1}
                                                \,d\rt\,dr  \right. \notag\\
    &\quad \left. + \int_{0}^{\infty}\int_{r}^{\infty} (\rt
      r)^{-\frac{n-2}{2}} \chi(r) \chi(\rt) f_{j}(\rt) g_{j}(r)
      J_{\nu_{j}}(\lambda r) H_{\nu_{j}}^{(1)}(\lambda \rt) \rt^{n-1}
      r^{n-1}\, d\rt \,dr \right),
  \end{align}
  where $J_{\nu}$ and $H_{\nu}^{(1)}$ are as above.  Because each term
  in equation~\eqref{eq:pair-expansion} meromorphically continues to
  the Riemann surface $\Lambda$, it suffices to show that the partial
  sums of the series converge locally (in $\lambda$) uniformly (in
  $j$).

  By the asymptotic expansions of Bessel functions for large order, we
  know~\cite[10.19]{DLMF} that, locally in $\lambda \in \Lambda$, and for $r\in \supp
  \chi$,
  \begin{align*}
    J_{\nu}(\lambda r) &= \frac{1}{\sqrt{2\pi \nu}} \left(
    \frac{e \lambda r}{2\nu}\right)^{\nu} + \lo\left( \frac{1}{\sqrt{\nu}}
    \left( \frac{e \lambda r}{2\nu}\right)^{\nu}\right), \\
    H_{\nu}^{(1)} (\lambda r) &= \frac{1}{i} \sqrt{\frac{2}{\pi \nu}}
                                \left( \frac{e \lambda
                                r}{2\nu}\right)^{-\nu} +
                                \lo\left( \frac{1}{\sqrt{\nu}}
                                \left( \frac{e \lambda r}{2\nu}\right)^{-\nu}\right),
  \end{align*}
  as $\nu \to \infty$ through the positive reals.  In particular,
  for $j$ large enough, each term in
  equation~\eqref{eq:pair-expansion} can be bounded by
  \begin{align*}
    &C \int_{0}^{\infty}\int_{0}^{r} \frac{1}{\pi \nu_{j}} \chi(r)
    \chi(\rt) f_{j}(\rt) g_{j}(r) \left[ \left(
        \frac{\rt}{r}\right)^{\nu_{j}} (1 + o (1))\right] (\rt
    r)^{\frac{n}{2}}\, d\rt \, dr \\
    &\quad \quad+ C\int_{0}^{\infty}\int_{r}^{\infty} \frac{1}{\pi \nu_{j}} \chi(r)
    \chi(\rt) f_{j}(\rt) g_{j}(r) \left[ \left(
        \frac{r}{\rt}\right)^{\nu_{j}} (1 + o (1))\right] (\rt
    r)^{\frac{n}{2}}\, d\rt \, dr.
  \end{align*}
  Observe that in the first integral, $\rt / r$ is bounded by $1$,
  while $r/\rt$ is bounded by $1$ in the second.  

  Because $\chi$ is compactly supported, we may therefore bound each
  term (for $j$ large enough) by
  \begin{equation*}
    \frac{C_{\chi}}{\nu_{j}} \norm[L^{2}]{f_{j}} \norm[L^{2}]{g_{j}}.
  \end{equation*}
  This sequence is absolutely summable, so the partial sums of the
  series in equation~\eqref{eq:pair-expansion} converge locally
  uniformly.  This establishes that the cut-off resolvent on the full cone
  ($a=0$) meromorphically extends to the logarithmic cover $\Lambda$
  of the complex plane.

  We now proceed to the case of the truncated cone ($a > 0$).  We
  proceed by an appeal to the analytic Fredholm theorem.  
  
  Fix $\chi_{0}, \chi_{\infty} \in C^{\infty}((a,\infty))$ so that
  $\chi_{0}(r)$ is supported near $r=a$, $\chi_{\infty}(r)$ is
  identically zero near $r=a$, and $\chi_{0} + \chi_{\infty} = 1$.
  We let $R_{\infty}(\lambda)$ denote the resolvent on the non-truncated
  cone and $R_{0}(\lambda)$ denote the resolvent on a compact manifold
  with boundary into which the support of $\chi_{0}$ embeds
  isometrically.  We define the parametrix
  \begin{equation*}
    Q(\lambda) = \tilde{\chi}_{0} R_{0}(\lambda) \chi_{0} +
    \tilde{\chi}_{\infty} R_{\infty}(\lambda) \chi_{\infty},
  \end{equation*}
  where $\tilde{\chi}$ have similar support properties and are
  identically $1$ on the support of their counterparts.  Applying
  $\lap + \lambda^{2}$ yields a remainder of the form $I + \sum [\lap,
  \tilde{\chi}_{i}] R_{i}(\lambda) \chi_{i}$.  Both terms are compact
  and the operator is invertible for large $\Im \lambda$ by Neumann
  series, so applying $R_{a}(\lambda)$ to both sides and inverting the
  remainder shows that it has a meromorphic continuation.  
\end{proof}

\section{Proof of Theorem~\ref{thm:main-thm}}
\label{sec:reson-count-trunc}

By the formula for the resolvent in Theorem~\ref{thm:resolvent}, the
resonances of $R_{a}(\lambda)$ correspond to those $\lambda$ for which
$H_{\nu_{j}}^{(1)}(\lambda a) = 0$ for some $j$.  For simplicity we
will discuss only the case $a=1$ as the other cases can be found by
rescaling.  As mentioned in the introduction, we consider only those
resonances nearest to the upper half-plane, i.e., those with
\begin{equation}
  \label{eq:arg-restriction}
  -\frac{\pi}{2} < \arg \lambda < 0 \quad \text{or} \quad \pi <
  \arg \lambda < \frac{3\pi}{2}.
\end{equation}

Because $\nu_{j}$ is real, we may relate the zeros of
$H_{\nu_{j}}^{(1)}(\lambda)$ in the region given by
equation~\eqref{eq:arg-restriction} to zeros of
$H^{(2)}_{\nu_{j}}(\lambda)$ in the quadrant
$0 < \arg \lambda < \frac{\pi}{2}$ via analytic continuation formulae.
Indeed, it is well-known~\cite[10.11.5, 10.11.9]{DLMF} that
\begin{align}
  \label{eq:connection-formulae}
  H^{(1)}_{\nu}(z e^{\pi}) &= -e^{-\nu \pi \imath} H^{(2)}_{\nu}(z),
  \\
  H^{(1)}_{\nu}(\overline{z}) &= \overline{H^{(2)}_{\nu}(z)}. \notag
\end{align}
The first of these equations identifies zeros of $H^{(1)}_{\nu}$ in $\pi <
\arg\lambda < \frac{3\pi}{2}$ to zeros of $H^{(2)}_{\nu}$ in the
first quadrant; the second equation does the same for zeros of
$H^{(1)}_{\nu}$ with $-\frac{\pi}{2}< \arg \lambda < 0$.  In
particular, each zero of $H^{(2)}_{\nu}$ with $0 \leq \arg \lambda
\leq \pi / 2$ corresponds to exactly two resonances.

For large enough $\nu$, the zeros of the Hankel function
$H^{(2)}_{\nu}$ in the first quadrant lie near the boundary of (a
scaling of) an ``eye-like'' domain $K\subset \complexes$.  The domain
$K$ is symmetric about the real axis and is bounded by the following
curve and its conjugate:
\begin{equation*}
  z = \pm (t \coth t - t^{2})^{1/2} + i (t^{2} - t \tanh t)^{1/2},
  \quad 0 \leq t \leq t_{0},
\end{equation*}
where $t_{0}$ is the positive root of $t = \coth t$.  We refer to the
piece of the boundary of $K$ lying in the upper half-plane by $\pd
K_{+}$.  

The constant $A_{n}$ given above is given by the following:
\begin{equation}
  \label{eq:defn-ofAn}
  A_{n} = \frac{2(n-1) \vol(B_{n-1})}{n (2\pi )^{n}} \int_{\pd K_{+}} \frac{|1-z^{2}|^{1/2}}{|z|^{n+1}}\,d|z|,
\end{equation}
where $B_{n-1}$ is the $(n-1)$-dimensional unit ball.  Observe that,
up to a factor of the volume of the unit sphere (which is replaced by
the volume of $Y$ in the theorem statement), the constant $A_{n}$
is the same constant computed by Stefanov~\cite{Stefanov}.

We use below two different parametrizations of the piece of $\pd
K_{+}$ lying the in the quadrant $0 \leq \arg z \leq \pi / 2$.  The
first parametrization is by the argument of $z$, i.e., by the map
\begin{equation*}
  \left[ 0 , \frac{\pi}{2}\right] \to \pd K_{+}, \quad \theta = \arg z
  \mapsto z = z(\theta).
\end{equation*}

For the second parametrization, we introduce the function $\rho$,
defined by
\begin{equation}
  \label{eq:rho-defn}
  \rho(z) =\frac{2}{3} \zeta^{3/2} =  \log \frac{1 + \sqrt{1-z^{2}}}{z} - \sqrt{1-z^{2}}, \quad
  |\arg z | < \pi,
\end{equation}
where (following Stefanov~\cite[Section 4]{Stefanov} and
Olver~\cite[Chapter 10]{Olver-book}) the branches of the functions
above are chosen so that $\zeta$ is real when $z$ is.  Another
characterization is that the principal branches are chosen when $0 < z
< 1$ and continuity is demanded elsewhere.

The boundary $\pd K$ is the vanishing set of $\Re \rho$.  This yields
a parametrization of the part of $\pd K_{+}$ lying in $0\leq \arg z
\leq \pi / 2$:
\begin{equation*}
  \left[0, \frac{\pi}{2}\right] \to \pd K_{+}, \quad t \mapsto \rho
  ^{-1}(- i t) = z.
\end{equation*}
The transition between the two parametrizations is given by
\begin{equation*}
  \frac{dt}{d\theta} = \frac{dt}{dz} \frac{dz}{d\theta} = (i \rho' (z))
  (iz) = \sqrt{1-z^{2}}.
\end{equation*}

The function $\zeta$ defined in equation~\eqref{eq:rho-defn} is the
solution of the ODE
\begin{equation*}
  \left( \frac{d\zeta}{dz}\right) ^{2} = \frac{1-z^{2}}{\zeta z^{2}}
\end{equation*}
that is infinitely differentiable on the positive real axis (including
at $z=1$).  As is implicit in equation~\eqref{eq:rho-defn}, it can be
analytically continued to the complex plane with a branch cut along
the negative real axis.

Because the resonances correspond to zeros of $H^{(2)}_{\nu_{j}}$, we
must also consider the asymptotic distribution of the $\nu_{j}$.  
In what follows, we consider only the case when the periodic geodesics
of $(Y,h)$ have measure zero.\footnote{When $(Y,h)$ is a sphere, the
  analysis is simplified slightly.  In that case, one replaces the use
  of the Weyl formula with explicit
formulae for the eigenvalues $\mu_{j}^{2}$ and their multiplicities.}
The eigenvalues $\mu_{j}^{2}$ of $\lap_{h}$ obey Weyl's law:
\begin{align*}
  N_{h}(\mu) &= \# \{ \mu_{j} : \mu_{j} \leq \mu \text{ with
               multiplicity }\} \\
  &= \frac{\vol{B_{n-1}}}{(2\pi)^{n-1}}\vol (Y,h) \mu^{n-1} + R(\mu).
\end{align*}
Here $\vol (B_{n-1})$ denotes the volume of the unit ball in
$\reals^{n-1}$ and $\vol (Y,h)$ is the volume of $Y$ equipped with the
metric $h$.  In general, $R(\mu) = \bo(\mu^{n-2})$, but if we now impose
the dynamical hypothesis (that the set of periodic geodesics of
$(Y,h)$ has Liouville measure zero), then a theorem of
Duistermaat--Guillemin~\cite{DG} (in the boundaryless case) and
Ivrii~\cite{Ivrii1,Ivrii2} (in the boundary case) shows that
\begin{equation*}
  R(\lambda) = \lo (\mu^{n-2}).
\end{equation*}
The non-periodicity assumption then allows us to count eigenvalues on
intervals of length one:
\begin{align*}
  N_{h}(\mu, \mu+1) &=\#\{ \mu_{j} : \mu \leq \mu_{j} \leq \mu + 1
                      \text{ with multiplicity }\} \\
  &= (n-1) \frac{\vol (B_{n-1})}{(2\pi)^{n-1}}\vol(Y,h)\mu^{n-2} + \lo (\mu^{n-2}).
\end{align*}
As $\nu_{j}^{2} = \mu_{j}^{2} + (n-2)^{2} / 4$, the same counting
formula holds for $\nu_{j}$, i.e.,
\begin{align}
  \label{eq:nu-counting}
  N_{\nu} (\rho, \rho + 1) &= \# \{ \nu_{j} : \rho \leq \nu_{j} \leq
  \rho + 1 \text{ with multiplicity }\} \notag\\
  &= (n-1) \frac{\vol (B_{n-1})}{(2\pi)^{n-1}}\vol(Y,h)\rho^{n-2} + \lo (\rho^{n-2}).
\end{align}

We now turn our attention to the zeros of the Hankel function
$H_{\nu}^{(2)}(z)$ with $\arg z \in [0 , \pi / 2]$.  An argument from
Watson~\cite[pages 511--513]{Watson} is easily adapted to give a
precise count of the number of zeros of $H^{(2)}_{\nu}$ in this
sector.  Indeed, that argument shows that the number of zeros is given
by the closest integer to $\nu / 2 - 1/4$ (when $\nu - 1/2$ is an
integer, there is a zero on the imaginary axis and so rounds up).

As $\nu \to \infty$ through positive real values, we have an
asymptotic expansion~\cite[10.20.6]{DLMF} relating the Hankel function
to the Airy function
\begin{equation}
  \label{eq:h2-airy}
  H^{(2)}_{\nu}(\nu z) \sim 2e^{i\pi / 3} \left(
    \frac{4\zeta}{1-z^{2}}\right)^{1/4} \left( \frac{\Ai (e^{-2\pi
        i/3}\nu^{2/3}\zeta)}{\nu^{1/3}} \sum_{k=0}^{\infty}
    \frac{A_{k}(\zeta)}{\nu^{2k}} + \frac{\Ai '(e^{-2\pi
        i/3}\nu^{2/3}\zeta)}{\nu^{5/3}} \sum_{k=0}^{\infty}\frac{B_{k}(\zeta)}{\nu^{2k}}\right).
\end{equation}
Here $A_{k}$ and $B_{k}$ are real and infinitely differentiable for
$\zeta \in \reals$.  This expansion is uniform in $|\arg z| \leq \pi - \delta$ for fixed
$\delta > 0$.  In particular, for large enough $\nu$, the zeros of the
Hankel function are well-approximated by zeros of the Airy function
and we may identify each zero $h_{\nu, k}$ of the Hankel function
$H^{(2)}_{\nu}$ with a zero of the Airy function $\Ai(-z)$.  

Let $a_{k}$ denote the $k$-th zero of the Airy function $\Ai (-z)$; all
$a_{k}$ are positive and
\begin{equation*}
  a_{k} = \left[ \frac{3}{2} \left(k \pi -
    \frac{\pi}{4}\right)\right]^{2/3} + \bo (k^{-4/3}).
\end{equation*}

We now define $\lambda_{\nu, k}$  and $\widetilde{\lambda}_{\nu,k}$
via the Airy zeros and their leading approximations:
\begin{align*}
  \lambda _{\nu, k} &= \nu \zeta ^{-1}(\nu^{-2/3} e^{-i
                      \frac{\pi}{3}}a_{k}) = \nu \rho^{-1}\left(
                      -i\frac{2}{3} a_{k}^{3/2} \nu^{-1}\right) \\
  \widetilde{\lambda}_{\nu,k} &= \nu \rho^{-1} \left( - i \left( k -
                                \frac{1}{4}\right)\pi \nu^{-1}\right), 
\end{align*}
where $k =  1, \dots, \floor{\nu /2 + 1/4}$.
By the Hankel expansion~\eqref{eq:h2-airy}, $|h_{\nu, k} -
\lambda_{\nu, k}| \leq C / \nu$ for large enough $\nu$ while $|h_{\nu,
k}  - \widetilde{\lambda}_{\nu, k}|  \leq C / \nu$ for large enough
$\nu$ and $k$.  As we have identified $\floor{\nu / 2 + 1 /4}$
approximate zeros, we can conclude that these account for all $h_{\nu,
k}$.  

We now divide our attention into those zeros with small argument and
those with large argument.  We introduce the auxiliary counting function
\begin{equation*}
  N(r, \theta_{1}, \theta_{2}) = \# \{ \sigma : \sigma \text{ is a
    resonance with } |\sigma | \leq r, \arg \sigma \in [ \theta_{1}, \theta_{2}]\}.
\end{equation*}

We first address those with small
argument.  Fix $\epsilon > 0$ and consider those zeros with $|z| < r$
and $\arg z \in [0,\epsilon]$.  We need count those $\lambda_{\nu,k}$
with $\arg \lambda _{\nu, k} \in [ 0, \epsilon]$ and $|\lambda _{\nu,
  k} | \leq r$.  As $|\lambda_{\nu,k}|$ is comparable to $\nu$, we can
overcount these zeros by counting all $\lambda_{\nu, k}$ with argument
in $[0, \epsilon]$ and $\nu \leq Cr$.

Because $|\rho| \leq C \epsilon^{3/2}$ for those $\lambda_{\nu, k}$
with $\arg \lambda _{\nu, k} \in [0,\epsilon]$, we must only count
those $a_{k}$ with $a_{k} \leq C \nu^{2/3}\epsilon$.  The leading
order asymptotic~\cite[9.9.6]{DLMF} for the zeros of the Airy function
shows that this number is $\bo (\nu \epsilon^{3/2})$.

We now count those resonances with argument in $[0,\epsilon]$.
Putting together the asymptotic for $\nu_{j}$ in
equation~\eqref{eq:nu-counting} with the previous two paragraphs, we
have (with $m(\nu_{j})$ denoting the multiplicity of $\nu_{j}$)
\begin{align}
  \label{eq:small-arg-est}
  N(r, 0, \epsilon) &= \sum_{j = 1}^{\infty}m(\nu_{j}) \#\left\{ h_{\nu_{j}, k} :
                      |h_{\nu_{j},k}| \leq r , \arg h_{\nu_{j},k} \in
                      [ 0, \epsilon]\right\}  \notag\\
                    &\leq \sum_{j=1}^{Cr} m(\nu_{j}) C\nu_{j} \epsilon^{3/2} \notag\\
                    &\leq C\epsilon^{3/2} \sum_{\rho = 0}^{Cr}\sum_{\nu_{j} \in [\rho,
                      \rho+1]}m(\nu_{j}) \rho \leq C \epsilon^{3/2}r^{n}. 
\end{align}

We now consider those resonances with argument in $[\epsilon , \pi /
2]$.  For large enough $\nu$, the approximations
$\widetilde{\lambda}_{\nu,k}$ are valid for these resonances.  We
count those approximate resonances with $\nu_{j} \in [ \rho, \rho +
1)$ and $\arg \lambda_{\nu,k} \in [\theta, \theta + \Delta\theta]$.  We
start by introducing, for fixed $\nu$, the number $\Delta k_{\nu}$ of
$\widetilde{\lambda}_{\nu, k}$ with argument lying in $[\theta, \theta
+ \Delta \theta]$.  Observe that the definition of
$\widetilde{\lambda}_{\nu,k}$ relates $\Delta k_{\nu}$ with $\Delta t$
by
\begin{equation*}
  \Delta k_{\nu} = \frac{\nu}{\pi}\Delta t + \bo (1),
 \end{equation*}
where $\Delta t$ denotes the change in $t$ corresponding to $\Delta
\theta$ in the parametrizations above.  Note that $\Delta t$ is
\emph{independent} of the choice of $\nu$.  We can then write
\begin{align*}
  \# \left\{ \widetilde{\lambda}_{\nu,k} : \nu_{j} \in [\rho, \rho+1), \arg
  \widetilde{\lambda}_{\nu,k} \in [\theta , \theta + \Delta \theta]\right\} & =
                                                                 \sum_{\rho
                                                                 \leq
                                                                 \nu_{j}
                                                                 \leq
                                                                  \rho+1}
                                                                 m(\nu_{j})
                                                                 \Delta
                                                                 k_{\nu}
  \\
                                                               &=
                                                                 \sum_{\rho
                                                                 \leq
                                                                 \nu_{j}
                                                                 <
                                                                 \rho
                                                                 + 1}
                                                                 m(\nu_{j})
                                                                 \left(
                                                                 \frac{\nu_{j}}{\pi}
                                                                 \Delta
                                                                 t +
                                                                 \bo (1)\right) 
\end{align*}

By the definition of the approximate zeros
$\widetilde{\lambda}_{\nu,k}$, we can estimate their size
$|\widetilde{\lambda}_{\nu,k}|$ in terms of $|z(\theta)|$, provided
that $\arg \widetilde{\lambda}_{\nu,k}\in [\theta, \theta + \Delta
\theta]$, yielding
\begin{equation*}
  |\widetilde{\lambda}_{\nu,k}|  = \nu \left( |z(\theta)| + \bo ( \Delta \theta)\right).
\end{equation*}
In particular, if $\nu_{j}|z(\theta)| \geq r$ but $|\lambda _{\nu, k}
| \leq r$, then $\nu_{j} \in \left[ \frac{r}{|z(\theta)|}(1 - c\Delta
  \theta), \frac{r}{|z(\theta)|}\right]$.  We may thus rewrite our
counting function as follows:
\begin{align*}
  \#\left\{
  \widetilde{\lambda}_{\nu,k} :
  |\widetilde{\lambda}_{\nu,k}|
  \leq r , \arg
  \widetilde{\lambda}_{\nu, k}
  \in [\theta, \theta
  + \Delta \theta]\right\}
  &=
    \sum_{\substack{|\widetilde{\lambda}_{\nu, k}| \leq r \\ \arg \widetilde{\lambda}_{\nu, k} \in
  [\theta, \theta + \Delta \theta]}} m(\nu_{j}) \\
  &= \sum_{\substack{\nu_{j}|z(\theta)| \leq r \\ \arg \widetilde{\lambda}_{j,k}
  \in [\theta, \theta + \Delta \theta]}} m(\nu_{j}) +
  \sum_{\substack{\nu_{j} \in \left[ \frac{r}{|z(\theta)|} (1 - c
  \Delta \theta) , \frac{r}{|z(\theta)|}\right] \\ \arg \widetilde{\lambda}_{\nu,
  k} \in [\theta, \theta + \Delta \theta]}} m(\nu_{j}).
\end{align*}
By our improved Weyl's law~\eqref{eq:nu-counting}, the second term is
$\bo (r^{n-2})$.

We now focus our attention on the first term (here $\floor{\cdot}$
denotes the ``floor'' function):
\begin{align*}
  \sum_{\substack{\nu_{j}|z(\theta)| \leq r \\ \arg \widetilde{\lambda}_{j,k}
  \in [\theta, \theta + \Delta \theta]}} m(\nu_{j}) &= \sum_{\rho =
                                                      0}^{\floor{r/|z|
                                                      -
                                                      1}}\sum_{\nu_{j}\in[\rho,
                                                      \rho + 1)}
                                                      \sum_{\arg
                                                      \widetilde{\lambda}_{\nu,k}\in
                                                      [\theta, \theta
                                                      + \Delta
                                                      \theta]}
                                                      m(\nu_{j}) +
                                                      \sum_{\nu_{j}
                                                      \in
                                                      [\floor{r/z},
                                                      r/z]}
                                                      \sum_{\arg\widetilde{\lambda}_{\nu,k}\in
                                                      [\theta, \theta
                                                      + \Delta
                                                      \theta]}
                                                      m(\nu_{j}) \\
  &= \sum_{\rho = 0}^{\floor{r/|z| - 1}} \sum_{\nu_{j} \in [\rho,
    \rho+1)}m(\nu_{j}) \Delta k_{\nu} +          \sum_{\nu_{j}
                                                      \in
                                                      [\floor{r/z},
                                                      r/z]}
                                                      \sum_{\arg\widetilde{\lambda}_{\nu,k}\in
                                                      [\theta, \theta
                                                      + \Delta
                                                      \theta]}
    m(\nu_{j})  .
\end{align*}
Again by Weyl's law, we observe that the second term is $\bo
(r^{n-2})$.  By relating $\Delta t$ and $\Delta k_{\nu}$ we can
rewrite the first term:
\begin{align*}
  \sum_{\rho = 0}^{\floor{r/|z| - 1}} \sum_{\nu_{j} \in [\rho,
    \rho+1)}m(\nu_{j}) \Delta k_{\nu}  &= \sum_{\rho =
                                         0}^{\floor{r/|z|-1}}
                                         \sum_{\nu_{j} \in [\rho, \rho
                                         + 1)}m(\nu_{j})
                                         \frac{\nu_{j}}{\pi} \Delta t
                                         + \sum_{\nu_{j} \leq
                                         \floor{r/|z|}} m(\nu_{j}) O(1).
\end{align*}
By Weyl's law~\eqref{eq:nu-counting}, the second term is $\bo (r^{n-1})$, so
we again consider the first term.

As $\Delta t$ is independent of $\nu_{j}$, we may use
Weyl's law as well on the first term:
\begin{align*}
  \sum_{\rho =0}^{\floor{r/|z|-1}} \sum_{\nu_{j} \in [\rho, \rho  +
  1)}m(\nu_{j}) \frac{\nu_{j}}{\pi} \Delta t &= \sum_{\rho =
                                               0}^{\floor{r/|z|-1}}\left[\frac{n-1}{2^{n-1}\pi^{n}}\vol(B_{n-1})\vol(Y,h)\rho^{n-1}
                                               \Delta t + \bo (\rho
                                               ^{n-2}) +
                                               \lo(\rho^{n-1}) \Delta
                                               t\right] \\
  &= \frac{2(n-1)}{(2\pi)^{n}}\vol(B_{n-1}) \vol (Y,h) \Delta t \sum_{\rho
    = 0}^{\floor{r/|z|-1}} \rho^{n-1} + \bo (r^{n-1}) +
    \lo(r^{n})\Delta t \\
  &= \frac{2(n-1)}{(2\pi)^{n}n}\vol(B_{n-1})\vol(Y,h) \frac{1}{n}\left(
    \frac{r}{|z(\theta)|}\right)^{n} \Delta t + \bo (r^{n-1}) +
    \lo(r^{n}) \Delta t .
\end{align*}

We finally introduce a Riemann sum in $t$ to understand this main term:
\begin{align}
  \label{eq:main-est}
  \# \{ \widetilde{\lambda}_{\nu,k} &: |\widetilde{\lambda}_{\nu,k}|\leq r , \arg
                                      \widetilde{\lambda}_{\nu,k} \in [\epsilon, \pi/2]\} \\
                                    &=
                                                       \int_{t^{-1}(\epsilon)}^{\pi
                                                       / 2} \left(
                                                       \frac{2(n-1)\vol(B_{n-1})}{(2\pi)^{n}
                                                       n} \vol (Y, h) \right)
                                                       \frac{r^{n}}{|z(\theta)|^{n}}
                                                       \, dt + \bo
                                                       (r^{n-1}) + \lo
                                                       (r^{n}) \notag \\
  &= \frac{(n-1) \vol(B_{n-1})}{(2\pi)^{n}n }\vol(Y,h) r^{n} \int_{\pd
    K_{+}} \frac{1}{|z(\theta)|^{n}} \,dt + \bo (\epsilon r^{n})  +
    \lo (r^{n}) \notag \\
  &= \left(\frac{(n-1)\vol(B_{n-1})}{(2\pi)^{n} n} \vol (Y,h) \int_{\pd
    K_{+}} \frac{|1-z^{2}|^{1/2}}{|z|^{n+1}} \, d|z|\right) r^{n} +
    \bo (\epsilon r^{n}) + \lo (r^{n})\notag \\
  &=  A_{n} \vol(Y,h) r^{n} + \bo (\epsilon r^{n}). + \lo(r^{n}) \notag 
\end{align}
Here the prefactor of $2$ disappeared because the first integral
parametrizes only half of $\pd K_{+}$.  It reappears in the statement
of Theorem~\ref{thm:main-thm} because each zero here corresponds to
two resonances (one on each sheet).  We further observe that the
constant $A_{n}\vol(Y,h)$ agrees with the leading term found in the
Euclidean case found by Stefanov~\cite{Stefanov}.

Sending $\epsilon$ to $0$ establishes the theorem for the approximate
zeros $\lambda_{\nu,k}$.  Because each $\lambda_{\nu,k}$ is in a
$C/\nu$ neighborhood of a zero $h_{\nu,k}$, this finishes the proof of
the theorem.

\section*{Acknowledgments}
\label{sec:acknowledgements}

Part of this research formed the core of the second author's Master's
project at Texas A\&M University.  DB acknowledges partial support
from NSF grants DMS-1500646 and DMS-1654056.  The authors also
thank David Borthwick, Tanya Christiansen, Colin Guillarmou, and Jeremy Marzuola for helpful conversations.


\end{document}